\theoremstyle{definition}
\newtheorem{definition}{Definition}[section]
\newtheorem{remark}[definition]{Remark}
\theoremstyle{plain}
\newtheorem{theorem}[definition]{Theorem}
\newtheorem{proposition}[definition]{Proposition}
\newtheorem{claim}[definition]{Claim}
\newtheorem{question}[definition]{Question}
\def \sm {\setminus}
\def \cl {\colon}
\def \ce {\coloneqq}
\def \F {\mathbb{F}}
\def \P {\mathbb{P}}
\renewcommand{\le}{\leqslant}
\renewcommand{\ge}{\geqslant}
\renewcommand{\leq}{\leqslant}
\def \es {\varnothing}
\renewcommand \b[2] {\binom{#1}{#2}}
\newcommand*{\rom}[1]{\expandafter\@slowromancap\romannumeral #1@}
\def \mC {\mathcal{C}}
\def \mE{\mathcal{E}}
\def \mG {\mathcal{G}}
\def \mH{\mathcal{H}}
\def \mS {\mathcal{S}}
\def \mT {\mathcal{T}}
\newcommand{\floor}[1]{\left \lfloor #1 \right \rfloor}
\DeclareMathOperator{\poly}{poly}
\patchcmd{\thebibliography}{\labelsep}{\labelsep\itemsep=2pt \parsep=1.5pt \relax}{}{}
\newcommand{\fses}{f_{\text{sES}}}
\newcommand{\nses}{n_{\text{sES}}}
\newcommand{\fwes}{f_{\text{wES}}}
\newcommand{\nwes}{n_{\text{wES}}}
\title{Property O and Erd\H{o}s--Szekeres properties in linear hypergraphs}
\author{
	Suyun Jiang\thanks{School of Artificial Intelligence, Jianghan University, Wuhan, Hubei, China. Supported by Hubei Provincial Natural Science Foundation of China (No. 2025AFB666), National Natural Science Foundation of China (No. 11901246), China Scholarship Council and Institute for Basic Science (IBS-R029-C4).  \texttt{jiang.suyun@163.com}. }
	\and Ander Lamaison\thanks{Extremal Combinatorics and Probability Group (ECOPRO), Institute for Basic Science (IBS), Daejeon, South Korea.
		Supported by IBS-R029-C4. \texttt{ander@ibs.re.kr}. }
	\and Minghui Ouyang\thanks{School of Mathematical Sciences, Peking University, Beijing 100871, China. \texttt{ouyangminghui1998@gmail.com}. }}
\date{\vspace{-5ex}}
\begin{document}
	\maketitle
	\begin{abstract}
		An oriented $k$-uniform hypergraph, or oriented $k$-graph, is said to satisfy \emph{Property~O} if, for every linear ordering of its vertex set, there is some edge oriented consistently with this order. The minimum number $f(k)$ of edges in a $k$-graph with Property~O was first studied by Duffus, Kay, and R\"{o}dl, and later improved by Kronenberg, Kusch, Lamaison, Micek, and Tran. In particular, they established the bounds $k! + 1 \le f(k) \le \left(\lfloor\tfrac{k}{2}\rfloor+1 \right) k! - \lfloor\tfrac{k}{2}\rfloor(k-1)!$ for every $k \ge 2$. 
		
		In this note, we extend the study of Property~O to the \emph{linear} setting. We determine the minimum number $f'(k)$ of edges in a linear $k$-graph up to a $\operatorname{poly}(k)$ multiplicative factor, showing that $\frac{(k!)^2}{2e^2k^4} \le f'(k) \le (1+o(1)) \cdot 4 k^6 \ln^2 k \cdot (k!)^2$. Our approach also yields bounds on the minimum number $n'(k)$ of vertices in an oriented linear $k$-graph with Property~O. Additionally, we explore the minimum number of edges and vertices required in a linear $k$-graph satisfying the newly introduced Erd\H{o}s--Szekeres properties. 
		
		\medskip
		\noindent
		\textit{Keywords:}\; linear hypergraph, Property O, Erd\H{o}s--Szekeres property. 
	\end{abstract}
    
\section{Introduction}
	A central type of problem in extremal combinatorics asks: what is the maximum or minimum size of a finite structure that satisfies a given property? A classical example is \emph{Property B}, introduced by Bernstein in 1907~\cite{Ber07}, which concerns the minimum number of edges in a $k$-uniform hypergraph such that every two-coloring of its vertex set contains a monochromatic edge. For further developments, see~\cite{Erd63, Erd64, Bec78, Spe81, RS00, CK15, AS16}. 
	
	In 2018, Duffus, Kay, and R\"{o}dl~\cite{DKR18} introduced an analogue of Property B, called \emph{Property~O}, where coloring is replaced by ordering. An \emph{oriented $k$-uniform hypergraph}, or \emph{oriented $k$-graph}, is a pair $\mH = (V, \mE)$, where $V$ is a finite vertex set and $\mE$ is a family of ordered $k$-tuples of distinct vertices, with the condition that no two $k$-tuples correspond to the same underlying set. If $\mE$ contains one $k$-tuple for every $k$-subset of $V$, then $\mH$ is called a $k$-\emph{tournament}. A $k$-tuple $(x_1, x_2, \ldots, x_{k})$ is said to be \emph{consistent} with a linear order $<$ on $V$ if $x_1 < x_2 < \cdots < x_{k}$. 
	
	\begin{definition}[Property O]
		An oriented $k$-graph $\mH = (V, \mE)$ is said to have \emph{Property~O} if, for every linear order $<$ on $V$, there exists an edge $\vec{e} \in \mE$ consistent with $<$. Let $f(k)$ and $n(k)$ denote the minimum number of edges and vertices, respectively, in an oriented $k$-graph with Property~O. Formally, 
        \begin{gather*}
            f(k) \ce \min \Bigl\{ |\mE | \cl \text{there exists an oriented $k$-graph $\mH =(V,\mE)$ with Property O} \Bigr\}, \\
            n(k) \ce \min \Bigl\{ |V| \cl \text{there exists an oriented $k$-graph $\mH =(V,\mE)$ with Property O} \Bigr\}. 
        \end{gather*}
	\end{definition}
	
	It is straightforward to check that $f(2)=3$, for example by considering a cyclically oriented triangle. The study of $f(k)$ and $n(3)$ was initiated by Duffus, Kay, and R\"{o}dl~\cite{DKR18}, who proved that $n(3) \in \{6,7,8,9\}$ and established the following bounds. 
	
	\begin{theorem}[\hspace{-0.005em}\cite{DKR18}] \label{thm:DKR18}
		The function $f(k)$ satisfies 
			\[ k! \le f(k) \le \left(k^2\ln k\right) k!, \]
		where the lower bound holds for all $k$, and the upper bound holds for sufficiently large $k$.
	\end{theorem}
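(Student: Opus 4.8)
The plan is to treat the two bounds separately, both by standard probabilistic tools. For the lower bound $f(k)\ge k!$ (valid for every $k$), let $\mH=(V,\mE)$ have Property~O and sample a linear order $<$ on $V$ uniformly at random. For a fixed $\vec e=(x_1,\dots,x_k)\in\mE$, the relative order that $<$ induces on $\{x_1,\dots,x_k\}$ is uniform among the $k!$ possibilities, exactly one of which makes $\vec e$ consistent, so $\P[\vec e\text{ consistent with }<]=1/k!$. Property~O guarantees at least one consistent edge for every $<$, so the expected number of consistent edges, which by linearity equals $|\mE|/k!$, is at least $1$; hence $|\mE|\ge k!$, with no restriction on $k$.

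For the upper bound I would take $V=[n]$ for a value $n=n(k)$ to be fixed later, and define $\mE$ by orienting each $k$-subset of $[n]$ independently and uniformly among its $k!$ orientations; this is an oriented $k$-graph with exactly $\binom nk$ edges. For a fixed linear order $<$ on $[n]$, the $\binom nk$ orientations are independent, each consistent with $<$ with probability $1/k!$, so $\P[\,\text{no edge consistent with }<\,]=(1-1/k!)^{\binom nk}\le\exp\bigl(-\binom nk/k!\bigr)$. A union bound over the $n!$ linear orders then shows that a random orientation has Property~O with positive probability as soon as $\binom nk/k!>\ln(n!)$, giving $f(k)\le\binom nk$ for any such $n$.

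It remains to minimise $\binom nk$ over integers $n$ with $\binom nk/k!>\ln(n!)$. Writing $\binom nk/k!=n!/\bigl((k!)^2(n-k)!\bigr)$ and using Stirling's formula, for $n$ of order $k^2$ one obtains $\binom nk/k!=(1+o(1))\,(ne^2/k^2)^k e^{-k^2/(2n)}/(2\pi k)$, whereas $\ln(n!)=(1+o(1))\,n\ln n$. Solving $\binom nk/k!=\ln(n!)$ with these estimates locates the threshold at $n^*=(1+o(1))\,k^2/e^2$; moreover $\binom nk$ changes only by a factor $1+O(1/k)$ when $n$ increases by $1$ near $n^*$, so choosing $n=n^*$ gives $\binom{n^*}k=(1+o(1))\,k!\ln(n^*!)=(1+o(1))\tfrac{2}{e^2}\,k^2\ln k\cdot k!$. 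Since $2/e^2<1$, this yields $f(k)\le(k^2\ln k)\,k!$ once $k$ is large enough.

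The probabilistic steps are entirely routine; the main obstacle is the asymptotic analysis in the last paragraph — pinning down $n^*\sim k^2/e^2$ and confirming that the leading constant $2/e^2$ produced by $\binom{n^*}k\approx k!\ln(n^*!)$ really is below $1$, which is also the reason the upper bound is only asserted for sufficiently large $k$. A minor but necessary point along the way is that, because $\binom nk$ grows slowly near $n^*$, rounding $n^*$ up to an integer does not spoil the leading constant.
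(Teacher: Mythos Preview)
Your proposal is correct and matches the approach that the paper describes (the result is quoted from \cite{DKR18} and only sketched here): the lower bound via the expectation/counting argument, and the upper bound via a random $k$-tournament on $n=\Theta(k^2)$ vertices together with a union bound over all $n!$ orders. Your threshold $n^*=(1+o(1))k^2/e^2$ agrees with the paper's explicit choice $n=(k/e)^2\bigl(\pi\,e^{e^2/2}k^3\ln k\bigr)^{1/k}$, and your leading constant $2/e^2<1$ for $\binom{n^*}{k}/(k!\,k^2\ln k)$ is exactly what makes the stated upper bound hold for large $k$.
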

	
	The lower bound of \Cref{thm:DKR18} follows from a simple counting argument. For the upper bound, the authors showed that a random $k$-tournament on 
		\[ n=(k/e)^2 \Bigl(\pi \cdot \exp(e^2/2)\cdot k^3 \ln k \Bigr)^{1/k} = \Theta(k^2) \] 
    vertices, which has at most $k^2 \ln k \cdot k!$ edges, possesses Property~O with positive probability. They further proved that almost all $k$-tournaments with $(c-o(1)) \sqrt{k} \cdot k!$ edges fail to have Property~O, where $0<c<1$ is an absolute constant. 
	
	Subsequently, Kronenberg, Kusch, Lamaison, Micek, and Tran~\cite{KKLMT19} determined that $n(3)=6$ and sharpened the bounds in \Cref{thm:DKR18}. The improvement of $+1$ in the lower bound arises from an elegant number-theoretic argument showing that it is impossible for each ordering to be consistent with exactly one edge. The upper bound, on the other hand, was achieved via an explicit construction. Notably, the extremal construction is quite sparse: it uses roughly $k!$ vertices, most of which have degree $\floor{k/2} + 1$, in contrast to the random tournament construction in~\cite{DKR18}, which is a complete $k$-graph. 
	
	\begin{theorem}[\hspace{-0.005em}\cite{KKLMT19}] \label{thm:KKLMT19}
        For every integer $k \ge 3$, we have 
        	\[ k!+1 \le f(k) \le \Bigl(\lfloor \tfrac{k}{2} \rfloor + 1 \Bigr)k! - \lfloor \tfrac{k}{2} \rfloor (k-1)!. \]
    \end{theorem}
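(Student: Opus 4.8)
\medskip
\noindent\textbf{Proof proposal.}

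For the lower bound $f(k)\ge k!+1$ I would start from the inequality $f(k)\ge k!$ of \Cref{thm:DKR18}, recalling its proof: if $\mH=(V,\mE)$ has Property~O with $|V|=n$, then each ordered edge is consistent with exactly $n!/k!$ of the $n!$ linear orders of $V$, so the consistency classes (the class of $\vec e$ being the set of orders with which $\vec e$ is consistent) must cover all $n!$ orders, forcing $|\mE|\ge k!$. The key first step is that equality is possible only if these $k!$ classes \emph{partition} the set of linear orders --- $k!$ classes of size $n!/k!$ can cover $n!$ orders only if they are pairwise disjoint --- i.e.\ every linear order is consistent with exactly one edge. Since Property~O is preserved when isolated vertices are added, if $f(k)=k!$ then such a perfect construction exists on $n$ vertices for every large $n$, so we may take $n=p$ to be a large prime; it then suffices to prove that the set $S_p$ of linear orders of a $p$-element set admits no partition into $k!$ consistency classes.

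To obtain this contradiction I would look for an invariant violated by any such partition. A plain sign/parity argument does not suffice, since (writing orders as permutations $\sigma$) one has $\sum_{\sigma\in C}\operatorname{sgn}(\sigma)=0$ for any consistency class $C$ as soon as $n\ge k+2$: fixing the positions of the $k$ vertices of the defining tuple and summing the sign over the orderings of the remaining $\ge 2$ vertices already gives zero, so this identity imposes no constraint. Instead I would try to exploit the free action of $\mathbb{Z}/p$ on $S_p$ by relabelling the ground set, together with the fact that each consistency class has size $p!/k!$, which is divisible by $p$ but not by $p^2$; the goal would be to show that this cyclic action is incompatible with a hypothetical $k!$-class partition into consistency classes. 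Pinning down precisely which number-theoretic invariant completes the argument is, I expect, the main obstacle for this direction.

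For the upper bound I would exhibit an explicit oriented $k$-graph. Rewriting the target count as $(\lfloor k/2\rfloor+1)k!-\lfloor k/2\rfloor(k-1)!=k!+\lfloor k/2\rfloor(k-1)(k-1)!$ suggests building it from a ``main part'' of $k!$ edges together with a ``patching part'' of $\lfloor k/2\rfloor(k-1)(k-1)!$ edges. An approach I would try: take a vertex set of size $\Theta(k!)$ divided into $k$ colour classes (of possibly very unequal sizes, a few of them tiny so that a handful of vertices have large degree while the rest have degree $\lfloor k/2\rfloor+1$), and let the edges be carefully chosen increasing transversals --- one vertex per colour, in a prescribed colour order --- so that a pigeonhole/averaging argument over the colour classes converts any linear order into a consistent edge; the main part would be designed to almost-partition the linear orders of this vertex set, and the patching part to catch exactly those missed. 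The factor $\lfloor k/2\rfloor$ hints strongly that one should split the $k$ colours into a ``lower'' block of $\lceil k/2\rceil$ colours and an ``upper'' block of $\lfloor k/2\rfloor$ colours, cut any linear order at its median position, and recurse on $k$, so that Property~O for the whole reduces to Property~O on the two halves (with base cases $f(2)=3$ from the cyclically oriented triangle and the value $10$ for $k=3$). The main obstacle here is the bookkeeping: one must simultaneously ensure that no linear order evades \emph{every} edge and that the edge count lands exactly on the stated value rather than merely within lower-order terms, which forces a careful tuning of the colour-class sizes and of the set of colour orders used.
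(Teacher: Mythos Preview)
This theorem is not proved in the present paper: it is quoted from \cite{KKLMT19}, and the introduction only summarizes the original argument in two sentences. There is therefore no proof here against which to compare your proposal. What the paper does say is that the lower bound ``arises from an elegant number-theoretic argument showing that it is impossible for each ordering to be consistent with exactly one edge,'' and that the upper bound is ``achieved via an explicit construction'' on ``roughly $k!$ vertices, most of which have degree $\lfloor k/2\rfloor+1$.''

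Your outline is compatible with both hints. For the lower bound you correctly reduce to showing that the $k!$ consistency classes cannot partition the set of all linear orders, which is exactly the statement the paper attributes to \cite{KKLMT19}; your idea of padding with isolated vertices to reach a prime $n$ and then seeking an arithmetic obstruction is a plausible reading of ``number-theoretic argument,'' but you explicitly leave the decisive invariant unidentified, so this part remains a genuine gap rather than a proof. For the upper bound, your proposal of a $\Theta(k!)$-vertex construction with typical degree $\lfloor k/2\rfloor+1$ and a recursive halving structure matches the paper's description of the extremal construction, but again you stop at heuristics and acknowledged bookkeeping concerns rather than a verified construction.

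In short, the direction of both halves is consistent with the paper's brief summary, but since the paper itself supplies no proof there is nothing here to confirm or refute the steps you leave open; for those you would have to consult \cite{KKLMT19} directly.
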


    The first aim of this paper is to establish quantitative bounds for oriented linear $k$-graphs with Property~O. A $k$-graph is called \emph{linear} if every two distinct edges intersect in at most one vertex. For convenience, we say that an oriented $k$-graph $\mH$ has the \emph{linear Property~O} if it is linear and satisfies Property~O. 

    \begin{definition}[Linear Property O]
        An oriented linear $k$-graph $\mH = (V, \mE)$ is said to have \emph{linear Property~O} if, for every linear order $<$ on $V$, there exists an edge $\vec{e} \in \mE$ consistent with $<$. Let $f'(k)$ and $n'(k)$ denote the minimum number of edges and vertices, respectively, in an oriented linear $k$-graph with linear Property~O. Formally,
        \begin{gather*}
			f'(k) \ce \min \Bigl\{ |\mE | \cl \text{there exists an oriented linear $k$-graph $\mH =(V,\mE)$ with Property O} \Bigr\}, \\
			n'(k) \ce \min \Bigl\{ |V| \cl \text{there exists an oriented linear $k$-graph $\mH =(V,\mE)$ with Property O} \Bigr\}. 
        \end{gather*}
	\end{definition}

    Linear hypergraphs with Property~O can be used to translate between the vertex-ordered and vertex-unordered versions of certain graph problems. We illustrate this by using the Erd\H{o}s--Hajnal conjecture as an example. This conjecture states that, for every graph $F$ there exists a constant $c>0$, such that every graph $G$ on $n$ vertices not containing $F$ as an induced subgraph has a clique or an independent set of size at least $n^c$.

    Suppose that a counterexample is found to the vertex-ordered version of the Erd\H{o}s--Hajnal conjecture. Specifically, suppose that we found a vertex-ordered graph $F$, and a sequence of vertex-ordered graphs $\{G_n\}_{n=1}^\infty$, where $G_n$ has $n$ vertices, and its largest clique and independent set have size $n^{o(1)}$, such that $G_n$ does not contain $F$ as a vertex-ordered induced subgraph. Let $v_1 \prec v_2 \prec \cdots \prec v_r$ be the vertices of $F$, and let $\mH = (V,\mE)$ be an oriented linear $r$-graph with Property~O. Replace each edge $\vec{e} = w_1w_2 \cdots w_r \in \mE$ with a vertex-ordered copy of $F$, and denote the result by $F'$. 

    Since $\mH$ is linear, the vertices $\{w_1, w_2, \cdots, w_r\}$ induce a copy of $F$ (no other edge $\vec e' \in \mE$ introduces additional edges among them). Moreover, because $\mH$ has Property~O, any linear order $<$ of $V$ is consistent with some $\vec{e} \in \mE$, implying that $F'$ with the linear order $<$ contains $F$ as an ordered induced subgraph. As a consequence, $G_n$ cannot contain $F'$ as an induced subgraph. Therefore, the unordered graph sequence $\{G_n\}_{n=1}^\infty$ together with the graph $F'$ form a counterexample to the original Erd\H{o}s--Hajnal conjecture. 
    
	We establish the following quantitative bounds for $f'(k)$ and $n'(k)$: 
	
	\begin{theorem} \label{thm:main_linear_prop_O}
        For every integer $k \ge 2$, we have 
        \begin{gather*}
			\frac{(k!)^2}{2e^2k^4} \le f'(k) \le (1+o(1)) \cdot 4 k^6 \ln^2 k \cdot (k!)^2, \\
			\frac{(k-1) \cdot (k-1)!}{ek} \le n'(k) \le (1+o(1)) \cdot 4 k^4 \ln k \cdot k!. 
        \end{gather*}
	\end{theorem}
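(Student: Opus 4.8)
The plan is to get both upper bounds from one random construction and both lower bounds from the Lov\'asz Local Lemma combined with linearity, using the elementary inequality $m\le\binom n2/\binom k2$ (valid for every linear $k$-graph on $n$ vertices with $m$ edges) to pass between the vertex count and the edge count. For the upper bounds I would fix a target $n$, take a near-perfect linear $k$-graph $\mathcal H_0$ on $n$ vertices with $m=(1-o(1))\binom n2/\binom k2$ edges (such a linear packing exists by the R\"odl nibble, and a constant-factor loss would be harmless), and orient each edge uniformly and independently at random. For a fixed linear order $<$ the events ``$\vec e_i$ is consistent with $<$'' depend on disjoint blocks of the orientation randomness, hence are independent, each of probability $1/k!$, so the chance that $<$ is consistent with no edge is exactly $(1-1/k!)^m\le e^{-m/k!}$. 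A union bound over the $n!$ orders shows the random orientation of $\mathcal H_0$ has linear Property~O with positive probability as soon as $m>k!\ln(n!)$. Since $\ln(n!)<n\ln n$ and linearity permits $m$ up to $(1-o(1))n^2/k^2$, it suffices to take $n$ with $n^2/k^2\gtrsim k!\,n\ln n$; the smallest admissible choice is $n=\Theta(k^4\ln k\cdot k!)$, whence $m=\Theta(k^6\ln^2k\cdot(k!)^2)$, and a careful treatment of the $\ln(n!)$ term and of the nibble error recovers the stated $(1+o(1))\cdot 4k^4\ln k\cdot k!$ and $(1+o(1))\cdot4k^6\ln^2k\cdot(k!)^2$.

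For the lower bound on $n'(k)$, let $\mathcal H$ be a linear oriented $k$-graph with Property~O on $n$ vertices, draw a uniformly random linear order $<$, and for each edge $\vec e_i$ with underlying set $S_i$ let $A_i$ be the event that $\vec e_i$ is consistent with $<$. Then $\Pr[A_i]=1/k!$, and $A_i$ depends only on the relative order of $S_i$, so it is mutually independent of $\{A_j:S_j\cap S_i=\varnothing\}$; by linearity distinct edges meet $S_i$ in distinct vertices, so the dependency degree of $A_i$ is $D_i=\sum_{v\in S_i}(d(v)-1)\le k(\Delta-1)$, where $\Delta\le(n-1)/(k-1)$ is the maximum degree. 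If the symmetric local-lemma hypothesis $e\cdot\frac1{k!}(D+1)\le1$ held with $D=\max_iD_i$, some linear order would be consistent with no edge, contradicting Property~O; hence $D>k!/e-1$, so some edge has a vertex of degree at least $\frac1k(k!/e-1)+1$, and feeding this into $\Delta\le(n-1)/(k-1)$ gives $n\gtrsim(k-1)(k-1)!/(ek)$. Once the edge bound below is in hand, $m\le\binom n2/\binom k2$ reproduces the same order of magnitude for $n'(k)$.

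The \textbf{main obstacle} is the lower bound $f'(k)\ge(k!)^2/(2e^2k^4)$, because the argument above only exhibits one high-degree vertex and hence only forces $\Omega((k-1)!)$ edges, whereas the target is of order $((k-1)!)^2/k^2$. My first attempt would be a deletion version of the local lemma: with a threshold $t\approx(k-1)!/(2e)$ and $B$ the set of edges incident to a vertex of degree $\ge t$, the hypergraph $\mathcal H\setminus B$ has maximum degree $<t$, so its dependency degree is $<kt\le k!/e$ and the local lemma applies; thus a positive (indeed, by the quantitative local lemma, a $(1-o(1))$) fraction of all orders avoids $\mathcal H\setminus B$ and must be covered by $B$, forcing $|B|=\Omega(k!)$ and, since $B$ is a union of stars of high-degree vertices, many such vertices. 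The sticking point is that bounding the number of such vertices by $|B|/\Delta$ and then eliminating $n$ via $m\le\binom n2/\binom k2$ only yields $m\ge\Omega\big((k!)^{4/3}/k^{4/3}\big)$, a strictly weaker power; closing the gap to $(k!)^2/k^4$ seems to need a genuinely global structural input --- e.g. an argument that Property~O forces a constant fraction of \emph{all} vertices to have degree $\gtrsim(k-1)!/e$, or a use of Shearer's exact criterion to extract a large linear star directly from the fact that the probability vector $(1/k!)_i$ lies outside the Shearer region of the intersection graph of $\mathcal H$. Pushing through that global input, and then optimizing to the precise constant $1/(2e^2)$, is where I expect the bulk of the work to be.
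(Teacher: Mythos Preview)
Your upper bounds and your $n'(k)$ lower bound are essentially the paper's approach. For the upper bound the paper takes the lines of an affine space $\F_p^d$ (with a prime $k\le p<2k$) as an explicit optimal linear packing rather than invoking the nibble, but the random-orientation-plus-union-bound computation is identical and leads to the same threshold. Your direct Local Lemma argument on $\mH$ for $n'(k)$ is in fact cleaner than the paper's route (which extracts the vertex bound as a byproduct of the edge argument), and if you track your own arithmetic it actually delivers a bound about a factor of $k$ stronger than the stated $(k-1)(k-1)!/(ek)$.

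The gap you flag for $f'(k)$ is real; the missing idea is a \emph{vertex} deletion rather than your edge deletion. Set $\Lambda=\lfloor(k-1)!/(ek)\rfloor+2$ and $S=\{v:\deg_{\mH}(v)\ge\Lambda\}$. From each edge $\vec e$ that meets $S$ remove a single chosen vertex $v_{\vec e}\in\vec e\cap S$ (edges disjoint from $S$ stay intact); call the resulting oriented hypergraph $\mH'$. Then $\mH'$ still has Property~O, and every vertex has degree at most $\max\{\Lambda-1,\,|S|-1\}$ in $\mH'$: a vertex $v\notin S$ keeps its old degree $<\Lambda$, while a vertex $v\in S$ survives only in those edges through $v$ that contained a second $S$-vertex, and by linearity there are at most $|S|-1$ such edges. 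If $|S|\le\Lambda$, the dependency degree in $\mH'$ is at most $k(\Lambda-2)<(k-1)!/e$, so the Local Lemma (now with $p=1/(k-1)!$, since some edges have size $k-1$) produces an order inconsistent with every edge of $\mH'$, a contradiction. Hence $|S|\ge\Lambda+1$. Taking any $\Lambda$ vertices of $S$ and counting incidences --- with at most $\binom{\Lambda}{2}$ overcounts, again by linearity --- gives $|\mE(\mH)|\ge\Lambda^2-\binom{\Lambda}{2}=\Lambda(\Lambda+1)/2\ge(k!)^2/(2e^2k^4)$. The ``global structural input'' you were searching for is exactly this second use of linearity, bounding the residual degree of $S$-vertices after the vertex deletion; it is what forces $|S|$, and not merely $\Delta$, to be of order $(k-1)!/k$. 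No Shearer-type refinement is needed.
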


    To summarize, we determine $f'(k)$ and $n'(k)$ up to a $\poly(k)$ multiplicative factor. In particular, $f'(k)$ is on the order of $(k!)^2$, which is roughly the square of $f(k)$, and $n'(k)$ is on the order of $k!$, whereas $n(k)$ is bounded by a polynomial in $k$. A detailed comparison is provided in~\Cref{table:edges} and~\Cref{table:vertices}. 
    
	In addition to Property~O, it is also natural to require that the orders on the edges be mutually consistent and extend to a global linear order $<_1$. However, one cannot use the same definition as in Property~O, since otherwise the reverse $<_2$ of the global order would fail to be consistent with $<_1$ on every edge. To address this, we require that any ordering $<_2$ of $V$ be \emph{monotonically consistent} with some edge, meaning that $<_2$ either induces the same order on that edge as $<_1$, or its reverse. We refer to this as the \emph{Erd\H{o}s--Szekeres Property}, owing to its similarity with the classical Erd\H{o}s--Szekeres theorem.

    \begin{theorem}[Erd\H{o}s--Szekeres~\cite{ES35}] \label{thm:Erdos--Szekeres}
        For any integer $k$, every sequence of distinct real numbers of length at least $(k-1)^2 + 1$ contains a monotonically increasing or decreasing subsequence of length $k$.
    \end{theorem}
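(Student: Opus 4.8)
The plan is to prove this by the classical pigeonhole argument (due to Seidenberg). Let $a_1, a_2, \ldots, a_N$ be the sequence, with $N \ge (k-1)^2 + 1$ and all $a_i$ distinct. For each index $i$, define $x_i$ to be the length of a longest increasing subsequence ending at $a_i$, and $y_i$ to be the length of a longest decreasing subsequence ending at $a_i$; both are well-defined positive integers.

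Suppose, for contradiction, that the sequence has no monotone subsequence of length $k$. Then $x_i, y_i \in \{1, 2, \ldots, k-1\}$ for all $i$, so the assignment $i \mapsto (x_i, y_i)$ maps $\{1, \ldots, N\}$ into a set of only $(k-1)^2$ pairs. Since $N \ge (k-1)^2 + 1$, the pigeonhole principle produces indices $i < j$ with $(x_i, y_i) = (x_j, y_j)$.

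Next I would derive a contradiction from the strict monotonicity of these labels along the sequence. Because the $a_i$ are distinct, either $a_i < a_j$ or $a_i > a_j$. If $a_i < a_j$, then appending $a_j$ to a longest increasing subsequence ending at $a_i$ yields an increasing subsequence ending at $a_j$ of length $x_i + 1$, whence $x_j \ge x_i + 1$. If instead $a_i > a_j$, the symmetric argument gives $y_j \ge y_i + 1$. In either case $(x_i, y_i) \ne (x_j, y_j)$, contradicting the choice of $i$ and $j$.

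The argument is short and I do not anticipate a genuine obstacle; the only subtlety worth flagging is that distinctness of the numbers is used twice, once to guarantee that $a_i$ and $a_j$ are comparable, and once to ensure that the extended subsequence is still \emph{strictly} monotone. An alternative is to appeal to Dilworth's theorem applied to the partial order in which $i \preceq j$ precisely when $i \le j$ and $a_i \le a_j$: under this order, chains correspond to increasing subsequences and antichains to decreasing subsequences, so a poset on $(k-1)^2 + 1$ elements either has a chain of size $k$ or cannot be covered by $k-1$ antichains, forcing an antichain of size $k$. The pigeonhole proof above is, however, more self-contained, and it is the one I would present.
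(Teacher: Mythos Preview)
Your proof is correct and is precisely the classical pigeonhole argument the paper refers to; in fact the paper does not prove \Cref{thm:Erdos--Szekeres} separately but invokes exactly this $(x_i,y_i)$ labeling and the observation that the pairs are pairwise distinct in \Cref{sec:fwes}.
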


    Depending on whether a global order on the vertex set is fixed, we distinguish between a \emph{weak} and a \emph{strong} version, defined as follows.
	
	\begin{definition}[Weak and strong Erd\H{o}s--Szekeres Property]
        Let $\mH = (V, \mE)$ be a $k$-graph. We say that $\mH$ has the \emph{weak Erd\H{o}s--Szekeres Property} if there exists a linear order $<_1$ on $V$ such that, for every linear order $<_2$ on $V$, there exists an edge $e \in \mE$ on which $<_1$ and $<_2$ are monotonically consistent. We say that $\mH$ has the \emph{strong Erd\H{o}s--Szekeres Property} if the above holds for every choice of $<_1$.

        We denote by $\fwes(k)$ (resp. $\fses(k)$) the minimum number of edges in a $k$-graph with the weak (resp. strong) Erd\H{o}s--Szekeres Property, and by $\nwes(k)$ (resp. $\nses(k)$) the minimum number of vertices. Formally,
        \begin{gather*}
			\fwes(k) \ce \min \Bigl\{ |\mE| \cl \exists \, \mH=(V,\mE) \text{ with weak Erd\H{o}s--Szekeres Property} \Bigr\}, \\
			\fses(k) \ce \min \Bigl\{ |\mE| \cl \exists \, \mH=(V,\mE) \text{ with strong Erd\H{o}s--Szekeres Property} \Bigr\}, \\
			\nwes(k) \ce \min \Bigl\{ |V| \cl \exists \, \mH=(V,\mE) \text{ with weak Erd\H{o}s--Szekeres Property} \Bigr\}, \\
			\nses(k) \ce \min \Bigl\{ |V| \cl \exists \, \mH=(V,\mE) \text{ with strong Erd\H{o}s--Szekeres Property} \Bigr\}. 
        \end{gather*}
    \end{definition}

    Similarly to the case of Property~O, linear hypergraphs with the Erd\H{o}s-Szekeres property can be used in constructions in other graph and hypergraph problems. For an example of this, see how the proof of Theorem 1.2 in~\cite{Lam24} uses linear hypergraphs with the weak Erd\H{o}s-Szekeres property (Lemma 6.3 in the same paper). 

    Clearly, we have $\fwes(k)\leq \fses(k)$ and $\nwes(k)\leq \nses(k)$. Moreover, the classical Erd\H{o}s--Szekeres theorem implies the bounds 
		\[ \fses(k) \,\le\, \b{(k-1)^2+1}{k} \quad \text{and} \quad \nwes(k) \,=\, \nses(k)=(k-1)^2+1. \]  

    For $\fwes(k)$, we obtain a slightly improved bound. It is natural to ask whether the bound for $\fses(k)$ can also be improved, that is, whether fewer edges suffice to achieve the strong Erd\H{o}s--Szekeres property; see \Cref{prob:value_for_ses}. 
    
    \begin{theorem} \label{thm:bounds_for_fwes(k)}
		For every $k \ge 3$, we have
			\[ \frac{k!}{2}+1 \le \fwes(k) \le \b{(k-1)^2+1}{k} - \b{(k-1)^2 - 1}{k-1}. \]
	\end{theorem}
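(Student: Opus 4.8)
The plan is to prove the two bounds separately: the upper bound comes from an explicit (non-linear) construction that we analyse via a Dilworth-type refinement of Erd\H{o}s--Szekeres, while the lower bound combines a first-moment count with the number-theoretic trick behind \Cref{thm:KKLMT19}.

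\textbf{Upper bound.} Set $N \ce (k-1)^2+1$, let $V = \{1,\dots,N\}$, and let $<_1$ be the natural order on $V$. Define $\mE$ to be the family of all $k$-subsets of $V$ \emph{except} those that contain $N$ but not $N-1$. The excluded sets are counted by $\binom{N-2}{k-1}$, so $|\mE| = \binom{N}{k} - \binom{N-2}{k-1} = \binom{(k-1)^2+1}{k} - \binom{(k-1)^2-1}{k-1}$, as required. I claim $(V,\mE)$ has the weak Erd\H{o}s--Szekeres property with global order $<_1$. Fix a linear order $<_2$ on $V$ and write $a_i$ for the $<_2$-rank of $i$; a $k$-subset $S=\{i_1<_1\cdots<_1 i_k\}$ is monotonically consistent with $<_1,<_2$ exactly when $(a_{i_1},\dots,a_{i_k})$ is monotone. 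If the length-$(N-1)$ prefix $(a_1,\dots,a_{N-1})$ has a monotone subsequence of length $k$, the corresponding $k$-subset avoids $N$, hence lies in $\mE$, and we are done. Otherwise, since $N-1 = (k-1)^2$, the classical Dilworth-type analysis underlying \Cref{thm:Erdos--Szekeres} partitions $(a_1,\dots,a_{N-1})$ into exactly $k-1$ increasing subsequences $I_1,\dots,I_{k-1}$, and also into exactly $k-1$ decreasing subsequences $D_1,\dots,D_{k-1}$, each of length $k-1$. The index $N-1$ lies in some $I_t$ and some $D_s$, and, being the largest index present, is the last term of each, so $a_{N-1}=\max_{i\in I_t}a_i=\min_{i\in D_s}a_i$. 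If $a_N > a_{N-1}$ then $I_t\cup\{N\}$ is an increasing $k$-subset containing both $N-1$ and $N$; if $a_N<a_{N-1}$ then $D_s\cup\{N\}$ is a decreasing $k$-subset containing both. Either way we obtain a monotone $k$-subset containing $\{N-1,N\}$, which belongs to $\mE$. This proves the upper bound.

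\textbf{Lower bound.} Let $\mH=(V,\mE)$ have the weak Erd\H{o}s--Szekeres property with global order $<_1$, and set $n=|V|$. For a fixed edge $e$, a linear order $<_2$ is monotonically consistent with $<_1$ on $e$ iff the $k$ vertices of $e$ occur in $<_2$ either in their $<_1$-order or in its reverse; these are two distinct relative orders (as $k\ge 2$), and each is realised by exactly $n!/k!$ of the $n!$ linear orders. Hence each edge ``covers'' $2n!/k!$ orders, and since every order must be covered, $n!\le |\mE|\cdot 2n!/k!$, giving $\fwes(k)\ge k!/2$. Suppose now $|\mE|=k!/2$. Then the count is tight, so every linear order is monotonically consistent with \emph{exactly one} edge. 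Observe that an adjacent transposition applied to an order $<_2$ leaves its unique consistent edge unchanged unless it swaps two vertices that are consecutive (within $<_2$) among the $k$ vertices of that edge, in which case, because $k\ge 3$ and a single adjacent transposition of a length-$k$ word yields neither the word nor its reversal, the consistent edge must change. Adapting the number-theoretic argument of \Cref{thm:KKLMT19} (the one that rules out each ordering being consistent with exactly one edge) to this ``perfect partition'' of the $n!$ orders produces a contradiction, so in fact $\fwes(k)\ge k!/2+1$.

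\textbf{Where the difficulty lies.} In the upper bound the key observation is that the only orders $<_2$ whose monotone $k$-subsets all lie among the deleted edges (those through $N$ but missing $N-1$) are exactly those whose length-$(N-1)$ prefix is Erd\H{o}s--Szekeres-extremal, and extremality of the prefix is precisely what supplies an increasing \emph{or} a decreasing block of length $k-1$ ending at $N-1$ that can be completed through $N$; pinning down this dichotomy is the crux. For the lower bound the first-moment step is routine, and the delicate part is excluding the perfect partition to gain the ``$+1$''---this is the same phenomenon as $f(k)\ge k!+1$ in \Cref{thm:KKLMT19}, and it is here, and only here, that $k\ge 3$ is needed.
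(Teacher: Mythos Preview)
Your upper-bound construction and argument are correct and coincide with the paper's: the edge family is identical, and your Dilworth decomposition of the extremal prefix is a repackaging of the paper's $(x_i,y_i)$ pigeonhole (the paper shows that the pair $(k-1,k-1)$ must be attached to index $N-1$, which is equivalent to your statement that $N-1$ terminates both a maximal increasing and a maximal decreasing block of length $k-1$).

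The lower bound, however, has a genuine gap in the ``$+1$'' step. After correctly deducing that $|\mE|=k!/2$ would force every linear order to be monotonically consistent with \emph{exactly one} edge, you merely invoke ``the number-theoretic argument of \Cref{thm:KKLMT19}'' without carrying it out; this is an assertion, not a proof, and it is not clear that the KKLMT argument transfers to the monotone-consistency setting without further work. More to the point, you have overlooked a one-line finish that is specific to the weak Erd\H{o}s--Szekeres property and is exactly what the paper does: the distinguished global order $<_1$ is monotonically consistent with itself on \emph{every} edge, so in the tight case $<_1$ would be covered $|\mE|=k!/2\ge 3$ times (this is where $k\ge 3$ enters), contradicting the exact-cover conclusion. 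Your adjacent-transposition observation, while correct, is unnecessary.
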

    
	Analogous to the case of linear Property~O, we now consider the Erd\H{o}s--Szekeres property for linear $k$-graphs. Let $\fwes'(k), \fses'(k), \nwes'(k)$, and $\nses'(k)$ denote the minimum number of edges (resp. vertices) in a linear $k$-graph with the weak (resp. strong) Erd\H{o}s--Szekeres property. We obtain the following bounds: 
	
	\begin{theorem} \label{thm:main_linear_ES}
        For every integer $k \ge 2$, we have
        \begin{align*}
			\frac{(k!)^2}{8e^2k^4} \le \fwes'(k) \le \fses'(k) &\le (1+o(1)) \cdot 16 k^8 \ln^2 k \cdot \b{(k-1)^2+1}{k}^2 \\
				&= (1+o(1)) \cdot \frac{8 k^{2k+7} e^{2k} \ln^2 k}{\pi e^{5}},
        \end{align*}
        and
        \begin{align*}
			\frac{(k-1) \cdot (k-1)!}{2ek} \le \nwes'(k) \le \nses'(k) 
				&\le (1+o(1)) \cdot 4k^7 \ln k \cdot \b{(k-1)^2+1}{k} \\
            	&= (1+o(1)) \cdot \frac{2\sqrt{2} \cdot k^{k+6.5} \cdot e^k \ln k}{\sqrt{\pi} \cdot e^{2.5}}.
        \end{align*}
	\end{theorem}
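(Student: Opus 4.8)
The plan is to follow the same template as for \Cref{thm:main_linear_prop_O} and \Cref{thm:bounds_for_fwes(k)}: the lower bounds come from a counting argument and the upper bounds from a random linear hypergraph. The guiding principle is that the quantity $k!$ governing the Property~O analysis is replaced, in the lower bounds, by $k!/2$ (monotone consistency is twice as permissive as consistency), and in the upper bounds by $\b{(k-1)^2+1}{k}$, which is the ``Erd\H{o}s--Szekeres density'' furnished by \Cref{thm:Erdos--Szekeres}.

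\textbf{Lower bounds.} Since $\fwes'(k)\le\fses'(k)$ and $\nwes'(k)\le\nses'(k)$, it suffices to bound the weak parameters. Let $\mH=(V,\mE)$ be a linear $k$-graph with the weak Erd\H{o}s--Szekeres property, witnessed by an order $<_1$, and put $n=|V|$. For a uniformly random order $<_2$ on $V$, each edge is monotonically consistent with $<_1$ with probability exactly $2/k!$, and by hypothesis the corresponding events cover the whole sample space. Feeding this into the counting computation that proves the lower bounds of \Cref{thm:main_linear_prop_O}, with $1/k!$ replaced by $2/k!$ throughout, yields those bounds weakened by a factor of $4$ for edges and $2$ for vertices, i.e.\ $\fwes'(k)\ge\frac{(k!)^2}{8e^2k^4}$ and $\nwes'(k)\ge\frac{(k-1)(k-1)!}{2ek}$.

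\textbf{Upper bounds.} It is enough to exhibit a linear $k$-graph with the strong Erd\H{o}s--Szekeres property, since this dominates all four upper bounds. Following \Cref{thm:main_linear_prop_O}, I would let $\mathcal G$ be a random linear $k$-graph on $[n]$: include each $k$-subset of $[n]$ independently with a suitable small probability $p$, then delete one edge from each pair of edges meeting in at least two vertices. For $p$ small enough the number of conflicting pairs is negligible next to the number of edges, so the surviving hypergraph is linear, has $\Theta(n^2/k^{O(1)})$ edges, and its distribution is invariant under relabellings of $[n]$. The key reformulation is that $\mathcal G$ has the strong Erd\H{o}s--Szekeres property exactly when, for every relabelling $\mathcal G'$ of $\mathcal G$ and every permutation $\sigma\in S_n$, some edge $T\in\mathcal G'$ has $\sigma$ monotone on $T$; by exchangeability it suffices to bound, for a fixed $\sigma$, the probability that $\mathcal G$ contains no edge on which $\sigma$ is monotone, and then union bound over $\sigma$ and over the relabelling (a further factor at most $n!$). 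This is where \Cref{thm:Erdos--Szekeres} enters: choosing a uniformly random $\big((k-1)^2+1\big)$-subset $A$ of $[n]$, applying \Cref{thm:Erdos--Szekeres} to $\sigma$ restricted to $A$, and double counting show that every $\sigma\in S_n$ has at least $\b{n}{k}/\b{(k-1)^2+1}{k}$ monotone $k$-subsequences, so (up to the analysis of the deletion step) the failure probability for a fixed $\sigma$ is of order $\exp\!\big(-\,p\,\b{n}{k}/\b{(k-1)^2+1}{k}\big)$. Since $p\,\b{n}{k}=\Theta(n^2/k^{O(1)})$ while $\ln\!\big((n!)^2\big)=\Theta(n\ln n)=\Theta(nk\ln k)$ for $n$ in the relevant range, the union bound closes as soon as $n$ is a $\poly(k)\ln k$ multiple of $\b{(k-1)^2+1}{k}$. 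This gives the bound on $\nses'(k)$, and the number of edges of the construction gives the bound on $\fses'(k)$; the closed forms follow from Stirling's formula and $\b{(k-1)^2+1}{k}=(1+o(1))\frac{k^ke^{k-5/2}}{\sqrt{2\pi k}}$.

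\textbf{Main obstacle.} The delicate point is controlling the deletion step in the upper bound: a priori, deleting conflicting edges could remove all of the few monotone $k$-subsequences of $\mathcal G$ relevant to some bad $\sigma$. One therefore needs that, with high probability, for \emph{every} $\sigma$ \emph{many} monotone $k$-subsequences survive, which requires a concentration argument for the random linear construction (via the deletion method, Janson-type inequalities, or an analysis of the random greedy packing process) rather than a bare union bound; this is what pins down the exponent of $k$ in $\nses'(k)\le(1+o(1))\cdot4k^7\ln k\cdot\b{(k-1)^2+1}{k}$. A softer difficulty is that the strong Erd\H{o}s--Szekeres property quantifies over two global orders $<_1,<_2$ simultaneously; the permutation reformulation above, together with exchangeability of the random construction, absorbs this into a harmless extra factor of $n!$ in the union bound, affecting only lower-order terms.
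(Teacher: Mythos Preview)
Your lower-bound plan matches the paper's: replace the event probability $1/k!$ by $2/k!$ in the lower-bound proof of \Cref{thm:main_linear_prop_O} and carry the factor through. One terminological caveat: that proof is not a pure counting argument but a Lov\'asz Local Lemma argument combined with a high-degree-vertex analysis; a bare union bound would only give $|\mE|\ge k!/2$, not the squared lower bound. As long as you actually rerun the LLL/degree argument with $\Lambda=\lfloor(k-1)!/(2ek)\rfloor+2$ (as the paper does), the claimed bounds follow.

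For the upper bound you and the paper diverge. You propose the deletion-method random linear hypergraph and correctly isolate the obstacle: after deletion the surviving edges are correlated, and there is no a priori reason why, for \emph{every} pair $(\sigma,\tau)$, some monotonically consistent $k$-set survives; patching this requires a genuine concentration step that you sketch but do not carry out. The paper sidesteps the obstacle entirely by using an incremental (random greedy) construction instead: start with $\mC=\binom{[n]}{k}$, and in each of $m=\binom{n}{k}\big/\bigl(2k^2\binom{k}{2}\binom{n-2}{k-2}\bigr)$ rounds pick $e\in\mC$ uniformly and delete from $\mC$ everything meeting $e$ in two or more vertices. The key claim is that at every round and for every fixed pair $(\sigma,\tau)$, the current candidate pool $\mC$ still contains at least $\binom{n}{k}\big/\bigl(2\binom{(k-1)^2+1}{k}\bigr)$ sets on which $\sigma$ and $\tau$ are monotonically consistent---this is exactly your Erd\H{o}s--Szekeres double-count, but applied to the subfamily of $\bigl((k-1)^2+1\bigr)$-sets that meet each previously chosen edge in at most one vertex. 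Since $|\mC|\le\binom{n}{k}$ always, each round succeeds for $(\sigma,\tau)$ with conditional probability at least $1\big/\bigl(2\binom{(k-1)^2+1}{k}\bigr)$, yielding the clean product bound $\bigl(1-1/(2\binom{(k-1)^2+1}{k})\bigr)^m$ and then a straightforward union bound over the $(n!)^2$ pairs, with no concentration inequality needed. You do list ``random greedy packing'' among your options, which is precisely this; but the deletion line you lead with is incomplete as stated, and the greedy route is what actually pins down the $k^7$ and $k^8$ exponents.
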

	
	\Cref{table:edges} summarizes the known results for Property~O and the Erd\H{o}s--Szekeres properties on the number of edges, while \Cref{table:vertices} does the same for the number of vertices. 
    
    \begin{table}[htbp]
		\caption{Results for $f_.^\prime(k)$ and $f_.(k)$}
        \renewcommand\arraystretch{1.4}
		\begin{center}\scalebox{0.8}{
				\begin{tabular}{c|c|c|c|c}
					\hline
					\multirow{2}{*}{\textbf{Properties}}& \multicolumn{2}{c|}{\textbf{linear $k$-graph}  ($f_.^\prime(k)$)}	&	\multicolumn{2}{|c}{\textbf{$k$-graph} ($f_.(k)$)} \\
                    \cline{2-5}
                     & Lower bound & Upper bound & Lower bound & Upper bound\\
					\hline
					\textbf{Property O}	&$\frac{(k!)^2}{2e^2k^4}$ &  $(1+o(1)) \cdot 4 k^6 \ln^2 k \cdot (k!)^2$	&$k!+1$ \cite{KKLMT19} & $\left(\floor{\frac{k}{2}}+1 \right) k! - \floor{\frac{k}{2}}(k-1)!$ \cite{KKLMT19}	\\
					\hline
					\textbf{weak Erd\H{o}s--}& \multirow{4}{*}{$\frac{(k!)^2}{8e^2k^4}$} & \multirow{4}{*}{$(1+o(1)) \cdot \frac{8 k^{2k+7} e^{2k} \ln^2 k}{\pi e^5}$} &\multirow{4}{*}{$\frac{k!}{2}+1$ for $k \ge 3$} & \multirow{2}{*}{$\b{(k-1)^2+1}{k} - \b{(k-1)^2 - 1}{k-1}$}	\\
					\textbf{Szekeres Property}&\multirow{4}{*}{}&\multirow{4}{*}{}&\multirow{4}{*}{}&\multirow{2}{*}{} \\
                    \cline{1-1}
                    \cline{5-5}
					\textbf{strong Erd\H{o}s--}&						\multirow{4}{*}{} & \multirow{4}{*}{} &\multirow{4}{*}{} & \multirow{2}{*}{$\b{(k-1)^2+1}{k}$ \cite{ES35}}	\\
					\textbf{Szekeres Property}&\multirow{4}{*}{}&\multirow{4}{*}{}&\multirow{4}{*}{}&\multirow{2}{*}{} \\
					\hline
			\end{tabular}}
		\end{center}
		\label{table:edges}
	\end{table}
    \begin{table}[htbp]
		\caption{Results for $n_.^\prime(k)$ and $n_.(k)$}
        \renewcommand\arraystretch{1.4}
		\begin{center}\scalebox{0.8}{
				\begin{tabular}{c|c|c|c|c}
					\hline
					\multirow{2}{*}{\textbf{Properties}}& \multicolumn{2}{c|}{\textbf{linear $k$-graph}  ($n_.^\prime(k)$)}	&	\multicolumn{2}{|c}{\textbf{$k$-graph} ($n_.(k)$)} \\
                    \cline{2-5}
                     & Lower bound & Upper bound & Lower bound & Upper bound\\
					\hline
					\multirow{2}{*}{\textbf{Property O}}	& \multirow{2}{*}{$\frac{(k-1) \cdot (k-1)!}{ek}$} &  \multirow{2}{*}{$(1+o(1)) \cdot 4 k^4 \ln k \cdot k!$}	& $(1+o(1)) \left( \frac{k}{e} \right)^2 = (k!)^{2/k}$ \cite{DKR18} & $(1+o(1))\left( \frac{k}{e} \right)^2$ \cite{DKR18}	\\
                    \cline{4-5}
					\multirow{2}{*}{}	& \multirow{2}{*}{} &  \multirow{2}{*}{}	& \multicolumn{2}{|c}{$n(3)=6$ \cite{KKLMT19}}	\\
					\hline
					\textbf{weak (strong) Erd\H{o}s--}& \multirow{2}{*}{$\frac{(k-1) \cdot (k-1)!}{2ek}$} & \multirow{2}{*}{$(1+o(1)) \cdot \frac{2\sqrt{2} \cdot k^{k+6.5} \cdot e^k \ln k}{\sqrt{\pi} \cdot e^2}$} &\multicolumn{2}{|c}{\multirow{2}{*}{$\nwes(k) = \nses(k) = (k-1)^2+1$ \cite{ES35}}}	\\
					\textbf{Szekeres Property}&\multirow{2}{*}{}& \multirow{2}{*}{}&\multicolumn{2}{|c}{} \\
					\hline
			\end{tabular}}
		\end{center}
        \label{table:vertices}
	\end{table}

    \paragraph{Paper Organization.} 
    We study Property~O and the Erd\H{o}s--Szekeres properties for linear hypergraphs, and present the proofs of \Cref{thm:main_linear_prop_O} and \Cref{thm:main_linear_ES} in \Cref{sec:linear_prop_O} and \Cref{sec:linear_ES}, respectively. In \Cref{sec:fwes}, we further study the weak Erd\H{o}s--Szekeres property for general $k$-graphs, without assuming linearity. Finally, \Cref{sec:concl_rmk} concludes with remarks and open problems.
    
\section{Proof of \texorpdfstring{\Cref{thm:main_linear_prop_O}}{Theorem \ref{thm:main_linear_prop_O}}} \label{sec:linear_prop_O}
	The Lov\'asz Local Lemma is a fundamental result in the probabilistic method. We apply it here to show the existence of a total ordering that is inconsistent with all edges, provided the number of edges is small. 
	
	\begin{proposition}[Lov\'asz Local Lemma \cite{Spe77,She85}] \label{prop:LLL}
		Let $\{A_1, A_2, \cdots, A_k\}$ be a collection of events in some probability space. Let $p > 0$ and $d \in \mathbb{N}$ be constants such that $epd < 1$, where $e \approx 2.718$ is the base of the natural logarithm. If each event $A_i$ occurs with probability at most $p$ and is independent of all but at most $d$ other events, then the probability that none of these events occur is strictly positive; that is, $\P \left( \bigwedge_{i=1}^k \overline{A_{i}} \right) > 0$. 
	\end{proposition}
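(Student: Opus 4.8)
The statement is the symmetric form of the Lovász Local Lemma, so my plan is to present its classical proof by induction, viewing it as the special case of the general weighted local lemma with all weights equal. For each $i$, fix a set $D(i) \subseteq \{1,\dots,k\}\setminus\{i\}$ with $|D(i)| \le d$ such that $A_i$ is mutually independent of the family $\{A_j : j \notin D(i)\cup\{i\}\}$; this exists by hypothesis. Fix also a real number $x \in (0,1)$ with $p \le x(1-x)^d$: such an $x$ exists once $p$ is small compared with $d$ (the choice $x=\tfrac{1}{d+1}$ works under the standard condition $ep(d+1)\le 1$, since $(1-\tfrac{1}{d+1})^d > e^{-1}$, and the slightly weaker hypothesis $epd<1$ stated here is covered by the tight criterion of Shearer~\cite{She85}, which pins down the exact threshold for a prescribed dependency structure). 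The heart of the argument is the claim that for every $i$ and every $S \subseteq \{1,\dots,k\}\setminus\{i\}$,
\[ \P\!\left( A_i \;\middle|\; \bigwedge_{j\in S}\overline{A_j} \right) \le x, \]
which I would prove by induction on $|S|$; the base case $S=\varnothing$ is simply $\P(A_i) \le p \le x$.

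For the inductive step, I would split $S$ into $S_1 \ce S\cap D(i)$ and $S_2 \ce S\setminus D(i)$ and write
\[ \P\!\left( A_i \;\middle|\; \bigwedge_{j\in S}\overline{A_j} \right) = \frac{\P\!\left( A_i \wedge \bigwedge_{j\in S_1}\overline{A_j} \;\middle|\; \bigwedge_{j\in S_2}\overline{A_j} \right)}{\P\!\left( \bigwedge_{j\in S_1}\overline{A_j} \;\middle|\; \bigwedge_{j\in S_2}\overline{A_j} \right)}. \]
The numerator is at most $\P\!\left( A_i \;\middle|\; \bigwedge_{j\in S_2}\overline{A_j} \right) = \P(A_i) \le p$, where the equality uses that $A_i$ is mutually independent of the events indexed by $S_2$. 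For the denominator, listing $S_1 = \{j_1,\dots,j_r\}$ with $r \le d$, I would expand it as the telescoping product $\prod_{\ell=1}^{r}\P\!\left( \overline{A_{j_\ell}} \;\middle|\; \bigwedge_{m<\ell}\overline{A_{j_m}} \wedge \bigwedge_{j\in S_2}\overline{A_j} \right)$ and apply the induction hypothesis to each factor — each of these conditioning sets is strictly smaller than $S$ — to obtain a lower bound of $(1-x)^{r} \ge (1-x)^{d}$. Combining the two estimates, the ratio is at most $p/(1-x)^{d} \le x$, which closes the induction.

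Given the claim, the conclusion is immediate: ordering the events arbitrarily and telescoping once more,
\[ \P\!\left( \bigwedge_{i=1}^{k}\overline{A_i} \right) = \prod_{i=1}^{k}\P\!\left( \overline{A_i} \;\middle|\; \bigwedge_{j<i}\overline{A_j} \right) \ge (1-x)^{k} > 0. \]
The delicate part of the proof is the inductive step, and it is the only place that requires genuine care: one must check that $A_i$ really is independent of the entire sub-family $\{A_j : j\in S_2\}$ (so that the numerator collapses to $\P(A_i)$), and that every conditional probability appearing in the telescoped denominator is conditioned on a set strictly smaller than $S$, so that the induction is well founded. Everything else — choosing $x$ and verifying $p\le x(1-x)^{d}$ under the numerical hypothesis, and the final telescoping — is elementary, with the exact value of the constant being the content of Shearer's criterion.
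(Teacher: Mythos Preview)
The paper does not give its own proof of this proposition: it is stated as a classical result and attributed to the references~\cite{Spe77,She85}, then used as a black box in the lower-bound argument for \Cref{thm:main_linear_prop_O}. So there is nothing in the paper to compare your argument against.

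That said, what you wrote is the standard inductive proof of the symmetric Local Lemma and is correct as written. The one place to be careful is exactly where you flag it: the choice $x = \tfrac{1}{d+1}$ together with $(1-\tfrac{1}{d+1})^d > e^{-1}$ yields the hypothesis $ep(d+1)\le 1$, not the slightly sharper $epd<1$ stated here. You are right that closing this gap is precisely the content of Shearer's criterion, so deferring to~\cite{She85} for the exact constant is appropriate; but if you want the write-up to be fully self-contained under the hypothesis $epd<1$, you would need to supply that extra argument rather than cite it.
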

	
	\begin{proof}[\underline{Lower bound in \Cref{thm:main_linear_prop_O}}]
		It suffices to consider $k \ge 3$, since the case $k = 2$ is trivial. Let $\mH=(V(\mH), \mE(\mH))$ be an oriented linear $k$-graph satisfying Property~O. We aim to prove that $|\mE(\mH)| \ge \frac{(k!)^2}{2e^2k^4}$. Without loss of generality, let $V(\mH) = [n]$, and consider a uniformly random permutation $\sigma$ of $[n]$. For each edge $\vec{e} \in \mE(\mH)$, let $A_{\vec{e}}$ denote the event that $\sigma$ is consistent with $\vec{e}$. Note that the set of events $\bigl\{ A_{\vec{f}} \cl \vec{f} \in \mE(\mH),\, \vec{e} \cap \vec{f} = \es \bigr\}$ depend only on the relative ordering of vertices outside $\vec{e}$. Hence $A_{\vec{e}}$ is independent of this family of events. Moreover, since $\mH$ satisfies Property~O, we have $\P \left( \bigvee_{\vec{e} \in \mE(\mH)} A_{\vec{e}} \right) = 1$. 
		
		Let $\Lambda \ce \floor{ \frac{(k-1)!}{ek} } + 2$, and define $S \ce \{v \in V(\mH) \cl \deg(v) \ge \Lambda \}$ the set of vertices of degree at least $\Lambda$. For each edge $\vec{e} \in \mE(\mH)$ with $\vec{e} \cap S \ne \es$, fix an arbitrary vertex $v_{\vec{e}} \in \vec{e} \cap S$. We now construct a new oriented hypergraph $\mH'$ with vertex set $V(\mH') \ce V(\mH)$ and edge set
			\[\mE(\mH') \ce \Bigl\{\vec{e} \cl \vec{e} \in \mE(\mH),\, \vec{e} \cap S = \es \Bigr\} \cup \Bigl\{\vec{e} \sm \{v_{\vec{e}}\} \cl \vec{e} \in \mE(\mH),\, \vec{e} \cap S \neq \es \Bigr\}. \]
		Each edge in $\mE(\mH')$ inherits its orientation from the corresponding edge in $\mH$ (either unchanged or with the designated vertex removed). 
		
		By construction, $\mH'$ also satisfies Property~O. Moreover, since each edge of $\mH'$ arises from an edge of $\mH$ by removing at most one vertex, we have $\deg_{\mH'}(v) \le \deg_{\mH}(v)$ for all vertices $v$. We now analyze the degree of the vertices in $\mH'$. 
        \vspace{-1.5ex}
        \begin{itemize}
            \item If $v \notin S$, then $\deg_{\mH'}(v) \le \deg_{\mH}(v) \le \Lambda - 1$. 
            \vspace{-1.5ex}
            \item If $v \in S$, let us partition the edges containing $v$ into two sets: 
					\[ A \ce \Bigl\{ \vec{e} \in \mE(\mH) \cl v \in \vec{e} ,\, |\vec{e} \cap S| = 1 \Bigr\}, \quad B \ce \Bigl\{ \vec{e} \in \mE(\mH) \cl v \in \vec{e} ,\, |\vec{e} \cap S| \ge 2 \Bigr\}. \]
				Edges in $A$ will have $v$ removed in $\mH'$, so they do not contribute to $\deg_{\mH'}(v)$. For edges in $B$, since $\mH$ is linear, each other vertex $u \in S$ shares at most one edge with $v$, hence $|B| \le |S| - 1$. Thus $\deg_{\mH'}(v) \le |B| \le |S|-1$. 
        \end{itemize}
        \vspace{-1.5ex}
        Hence the maximum degree in $\mH'$ is at most $\max\{\Lambda-1, |S|-1\}$. 
		
		Now, for each $\vec{e} \in \mE(\mH')$, define $A'_{\vec{e}}$ to be the event that a random permutation $\sigma$ is consistent with $\vec{e}$. These events also satisfy the independence condition, i.e., $A_{\vec{e}}'$ is independent of the family $\bigl\{ A_{\vec{f}}' \cl \vec{f} \in \mE(\mH') ,\, \vec{e} \cap \vec{f} = \es \bigr\}$, and $\P \left( \bigvee_{\vec{e} \in \mE(\mH')} A_{\vec{e}}^\prime \right) = 1$. 
		
		Suppose $|S| \le \Lambda$. Consider an arbitrary edge $\vec{e} \in \mE(\mH')$. Since $|\vec{e}| \le k$ and the maximum degree of $\mH'$ is at most $\max\{\Lambda-1, |S|-1\} = \Lambda-1$, the number of edges intersecting $\vec{e}$ is at most
			\[ \left| \{\vec{e'} \in \mE(\mH') \cl \vec{e'} \neq \vec{e} \text{ and } \vec{e'} \cap \vec{e} \neq \es \} \right| \,\le\, k \cdot (\Lambda-2) \,=\, k \cdot \floor{ \frac{(k-1)!}{ek} } \,<\, \floor{ \frac{(k-1)!}{e} }. \]
		Applying the Lov\'asz Local Lemma with $p=\frac{1}{(k-1)!}$, we obtain $\P \left( \bigwedge_{\vec{e} \in \mE(\mH')} \overline{A_{\vec{e}}} \right) > 0$, contradicting Property~O. 
		
		Therefore $|S| \ge \Lambda + 1$. Let $S' \subseteq S$ be an arbitrary subset of size $\Lambda$. Now we can lower bound $|\mE(\mH)|$. Each vertex $v \in S'$ lies in at least $\Lambda$ edges, given at least $\Lambda |S'|$ total incidences. Since $\mH$ is linear, each edge contains at most one pair of vertices in $S'$, so the number overcounts due to shared edges is at most $\b{|S'|}{2}$. It follows that 
			\[ \bigl|\mE(\mH)\bigr| \,\ge\, \Lambda |S'| - \b{|S'|}{2} \,= \,\Lambda^2 - \b{\Lambda}{2} \,=\, \frac{\Lambda(\Lambda+1)}{2} \,\ge\, \frac{(k!)^2}{2e^2k^4}. \]

        For the lower bound on $n'(k)$, we observe that there exists some vertex $v \in S$ of degree at least $\Lambda$ (since $S \neq \varnothing$). Each of the edges containing $v$ includes $k-1$ other vertices. By the linearity of $\mH$, all these $(k-1)\Lambda$ vertices must be distinct, so we have 
			\[ \bigl|V(\mH)\bigr| \,\ge\, (k-1)\Lambda + 1 \,>\, \frac{(k-1) \cdot (k-1)!}{ek}. \qedhere \]
	\end{proof}
	
	\begin{proof}[\underline{Upper bound in \Cref{thm:main_linear_prop_O}}]
		Let $p$ be a prime number with $k \le p < 2k$, and let $d$ be a parameter to be chosen later. Consider the $d$-dimensional affine space $\F_p^d$ over the finite field $\F_p$. Define $\mH$ as the $p$-uniform hypergraph whose vertex set is $\F_p^d$, and whose edges are the lines in $\F_p^d$. 
        
        Each such edge initially contains $p$ vertices; we shrink each edge arbitrarily to size $k$, and then orient each resulting $k$-set independently and uniformly at random. We show that, with positive probability, the resulting oriented $k$-graph has Property~O. 
		
        Fix an arbitrary linear ordering $\sigma$ of the vertex set $\F_p^d$. For each edge $\vec{e}$, the probability that its orientation is consistent with $\sigma$ is precisely $1/k!$. By independence of edge orientations and a union bound over all possible vertex orderings, we obtain
			\[ \P \biggl( \mH \text{ does not satisfy Property O} \biggr) \le (p^d)! \cdot \left( 1 - \frac{1}{k!} \right)^{\b{p^d}{2} / \b{p}{2}}. \]
		
        A careful estimation shows that choosing $p^d \ge (1+o(1)) \cdot 2 k^3 \ln k \cdot k!$ makes the right-hand side is strictly less than $1$. Hence, with positive probability, there exists an orientation of $\mH$ satisfying Property~O. This construction gives the following bounds on the number of vertices and edges: 
        \begin{align*}
			\bigl| V(\mH) \bigr| \,\le\, \bigl| \F_p^d \bigr| \,\le\, p \cdot (1+o(1)) \cdot 2 k^3 \ln k \cdot k! \,&\le\, 2k \cdot (1+o(1)) \cdot 2 k^3 \ln k \cdot k! \\
				&=\, (1+o(1)) \cdot 4 k^4 \ln k \cdot k!, 
        \end{align*}
        and the number of edges is at most 
			\[ \bigl| E(\mH) \bigr| \,\le\, \b{p^d}{2} \Big/ \b{p}{2} \,\le\, (1+o(1)) \cdot 4 k^6 \ln^2 k \cdot (k!)^2. \qedhere \]
	\end{proof}
	
\section{Proof of \texorpdfstring{\Cref{thm:main_linear_ES}}{Theorem \ref{thm:main_linear_ES}}}\label{sec:linear_ES}
	\begin{proof}[\underline{Lower bound in \Cref{thm:main_linear_ES}}]
		The argument closely follows that of \Cref{thm:main_linear_prop_O}. For each edge, there are exactly two vertex orderings that are monotonically consistent with the edge. Hence, for a uniformly random permutation $\sigma$ of the vertex set, the probability that the event $A_e$ occurs is $\frac{2}{k!}$, which is twice the corresponding probability in the proof of \Cref{thm:main_linear_prop_O}. Define $\Lambda \ce \floor{ \frac{(k-1)!}{2ek} } + 2$. By applying the same argument with this updated probability, we again deduce that $|S| \ge \Lambda + 1$. Repeating the analysis yields the desired lower bounds for $\fwes'(k)$ and $\nwes'(k)$.  
	\end{proof}

    For the upper bound, we prove that there exists a sufficiently large, dense linear $k$-graph that satisfies the Erd\H{o}s--Szekeres property. Unlike in the proof of \Cref{thm:main_linear_prop_O}, we cannot assign the orderings on the set of edges independently, since they must extend to a global total order. This obstruction prevents a direct adaptation of the previous approach. 
    
    Nevertheless, by the Erd\H{o}s--Szekeres theorem, any two fixed orderings $\sigma$ and $\tau$ agree monotonically on some $k$-subset within every $(k-1)^2+1$ vertices. Hence, the collection of $k$-subsets on which $\sigma$ and $\tau$ are consistent is fairly dense. A random linear $k$-graph with sufficiently many edges will intersect this collection with high probability. 
    
    We analyze this via an incremental random construction, where the success probability at each step is readily estimated. While our analysis remains somewhat coarse, we expect that considering a random linear $k$-graph of maximum possible density could yield a $\poly(k)$ improvement. This refinement is not essential, however, as the current gap between the upper and lower bounds for the Erd\H{o}s--Szekeres properties is of order $\exp(k)$ (see \Cref{prob:es_poly_factor}). 
    
	\begin{proof}[\underline{Upper bound in \Cref{thm:main_linear_ES}}]
        We use an incremental random construction to generate a hypergraph $\mH$. We aim to show that, for every pair of orderings $(\sigma, \tau)$, the probability that $\sigma$ and $\tau$ fail to be monotonically consistent on all edges of $\mH$ is less than $\frac{1}{(n!)^2}$. The result then immediately follows from the union bound. 
			
		Let $V(\mH) = [n]$, where the value of $n$ will be chosen later, and set $m \ce \frac{\b{n}{k}}{2k^2 \cdot \b{k}{2}\b{n-2}{k-2}}$. Throughout the algorithm, we maintain a set $\mC$ of candidate edges for the final $k$-graph. The construction proceeds in rounds: in each round we pick an edge $e$ uniformly at random from $\mC$ and add it to $\mE(\mH)$. To ensure that the resulting $k$-graph is linear, we then delete from $\mC$ all edges intersecting $e$ in at least two vertices. For convenience, we assume that $m$ is an integer. The precise procedure is given in Algorithm~\ref{alg:lowerbound_linear_ES}. 
			
		\begin{algorithm}[htbp] \label{alg:lowerbound_linear_ES}
			\caption{\,Incremental random construction} 
			$V(\mH) \ce [n]$, $\mE(\mH) \ce \es$ \\
			$\mC \gets \b{[n]}{k}$ \\
			\For{$i = 1, \ldots, m$}{
				Pick $e \gets \mC$ uniform randomly \\
				$\mE(\mH) \gets \mE(\mH) \cup \{e\}$ \\
				\For{all $e' \in \mC$ such that $|e' \cap e| \ge 2$}{
					$\mC \gets \mC \sm \{e'\}$
				}
			}
			\textbf{Return} $\mH$
		\end{algorithm}

        \begin{claim} \label{clm:1}
            The algorithm is well-defined; that is, $\mC \ne \es$ throughout its execution. 
        \end{claim}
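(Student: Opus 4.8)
The plan is to bound how many candidate edges can be deleted from $\mC$ in a single round, and then verify that after at most $m-1$ rounds this total is still strictly below the initial size $\b{n}{k}$, so that the random pick in every round has something to choose from.

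First, fix a round and let $e$ be the $k$-set chosen in it. Every edge $e'$ deleted in that round satisfies $|e' \cap e| \ge 2$, hence $e'$ is obtained by selecting two of its vertices inside the $k$-set $e$ and the remaining $k-2$ vertices arbitrarily. Consequently the number of $k$-sets $e'$ with $|e' \cap e| \ge 2$ is at most $\b{k}{2}\b{n-2}{k-2}$; this over-counts edges meeting $e$ in three or more vertices, which only strengthens the bound. In particular each round removes at most $\b{k}{2}\b{n-2}{k-2}$ edges from $\mC$, and this count already includes $e$ itself (since $|e \cap e| = k \ge 2$).

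Next, consider the moment just before the uniform pick in round $i$, for any $1 \le i \le m$. At that point only rounds $1, \dots, i-1$ have been executed, so at most $(i-1)\b{k}{2}\b{n-2}{k-2} \le (m-1)\b{k}{2}\b{n-2}{k-2}$ edges have been removed from $\mC$. Using the definition $m \ce \b{n}{k} \big/ \bigl(2k^2 \cdot \b{k}{2}\b{n-2}{k-2}\bigr)$, we get
\[ |\mC| \;\ge\; \b{n}{k} - (m-1)\b{k}{2}\b{n-2}{k-2} \;>\; \b{n}{k} - m\b{k}{2}\b{n-2}{k-2} \;=\; \Bigl(1 - \tfrac{1}{2k^2}\Bigr)\b{n}{k} \;>\; 0. \]
Thus $\mC \ne \es$ whenever a pick is made, which is exactly the assertion that the algorithm is well-defined.

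There is no real obstacle here: the argument is a one-line counting estimate, and the only point to emphasize is that the constant $2k^2$ in the definition of $m$ was chosen precisely to leave this slack. In fact the computation shows the stronger statement that $\mC$ always retains at least a $\bigl(1 - \tfrac{1}{2k^2}\bigr)$-fraction of $\b{n}{k}$ throughout the run, a quantitative form that will be reused when estimating the per-round success probability in the remainder of the proof.
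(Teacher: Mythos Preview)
Your proof is correct and essentially identical to the paper's: both bound the number of $k$-sets meeting a fixed $e$ in at least two vertices by $\b{k}{2}\b{n-2}{k-2}$, then use $m \cdot \b{k}{2}\b{n-2}{k-2} = \tfrac{1}{2k^2}\b{n}{k} < \b{n}{k}$ to conclude $\mC$ stays nonempty. Your version is marginally more careful in distinguishing $i-1$ from $i$, but the argument is the same one-line count.
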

        
		\begin{proof}
			For any $k$-subset $e \subseteq [n]$, the number of $k$-subsets $e' \in \mC$ with $|e' \cap e| \ge 2$ is at most $\b{k}{2}\b{n-2}{k-2}$. Consequently, after $i$ rounds, the remaining size of $\mC$ is at least 
				\[ \b{n}{k} - i \cdot \b{k}{2}\b{n-2}{k-2} \,\ge\, \b{n}{k} - \frac{\b{n}{k}}{2k^2 \cdot \b{k}{2}\b{n-2}{k-2}} \cdot \b{k}{2}\b{n-2}{k-2} \,>\, 0, \]
            which proves the claim. 
		\end{proof}

        \begin{claim} \label{clm:2}
            Fix a pair of permutations $(\sigma, \tau)$. For each round $i = 1,2,\cdots, m$, let $e$ denote the edge chosen in that round. Then, the probability that $\sigma$ and $\tau$ are monotonically consistent on $e$ satisfies: 
				\[ \P \bigg( (\sigma, \tau) \text{ are monotonically consistent on } e \bigg) \,\ge\, \frac{1}{2\b{(k-1)^2+1}{k}}. \]
		\end{claim}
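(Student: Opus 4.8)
The plan is to bound the probability by summing, over all ``good'' $k$-subsets, the chance that a given good set is exactly the edge $e$ drawn in round $i$. Call a $k$-subset $S\subseteq[n]$ \emph{good} if $\sigma$ and $\tau$ are monotonically consistent on it, and write $\mathcal M\subseteq\binom{[n]}{k}$ for the family of good sets. The argument has three ingredients: a lower bound on $|\mathcal M|$; a ``survival'' estimate showing that each fixed good set is still a candidate at the start of round $i$ with probability at least $\tfrac12$; and a short computation combining the two.

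For the lower bound on $|\mathcal M|$, I would invoke \Cref{thm:Erdos--Szekeres}: every $\bigl((k-1)^2+1\bigr)$-subset $T\subseteq[n]$ contains a good $k$-subset, since listing $T$ in $\sigma$-order and recording the $\tau$-ranks yields a sequence of $(k-1)^2+1$ distinct reals, hence a monotone subsequence of length $k$ --- an increasing one corresponds to a $k$-set on which $\sigma$ and $\tau$ agree, a decreasing one to a $k$-set on which they reverse. Double-counting incidences between $\mathcal M$ and the $\bigl((k-1)^2+1\bigr)$-subsets of $[n]$ (each good $S$ lying in exactly $\binom{n-k}{(k-1)^2+1-k}$ of them) then gives $|\mathcal M|\ge\binom{n}{k}\big/\binom{(k-1)^2+1}{k}$.

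For the survival estimate, the crucial point is \emph{not} to control the good sets deleted from $\mC$ by the total number of deletions, which is at most $m\binom{k}{2}\binom{n-2}{k-2}=\binom{n}{k}/(2k^2)$: that (additive) bound is hopeless here, since $\binom{(k-1)^2+1}{k}$ is super-polynomially larger than $k^2$, so it would permit all of $\mathcal M$ to be destroyed. Instead I would fix a single good set $S$ and track it: $S$ leaves $\mC$ in round $j$ only if the uniformly chosen edge $e_j$ meets $S$ in at least two vertices while $S$ is still present, and these events are disjoint over $j$. Conditioning on the current candidate set and using that the proof of \Cref{clm:1} in fact gives $|\mC|\ge(1-\tfrac1{2k^2})\binom{n}{k}$ at the start of every round, each such event has probability at most $\binom{k}{2}\binom{n-2}{k-2}\big/\bigl((1-\tfrac1{2k^2})\binom{n}{k}\bigr)$; a union bound over the fewer than $m$ rounds before round $i$, together with the definition of $m$, yields $\P(S\notin\mC\text{ at the start of round }i)\le\tfrac{1}{2k^2-1}\le\tfrac12$ for $k\ge2$. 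In particular $\sum_{S\in\mathcal M}\P(S\in\mC)\ge|\mathcal M|/2$, a \emph{multiplicative} guarantee --- this is precisely where using the per-set bound rather than a total-deletion bound is essential.

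Finally, since $e$ is drawn uniformly from the (random) candidate set $\mC$ at the start of round $i$ and $|\mC|\le\binom{n}{k}$ always, so that $\P(e=S)=\E\bigl[\mathbf 1\{S\in\mC\}/|\mC|\bigr]\ge\P(S\in\mC)/\binom{n}{k}$, I would conclude
\[
\P(e\in\mathcal M)=\sum_{S\in\mathcal M}\P(e=S)\ge\frac{1}{\binom{n}{k}}\sum_{S\in\mathcal M}\P(S\in\mC)\ge\frac{|\mathcal M|}{2\binom{n}{k}}\ge\frac{1}{2\binom{(k-1)^2+1}{k}},
\]
which is exactly the asserted bound on the probability that $\sigma$ and $\tau$ are monotonically consistent on $e$. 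The step I expect to be the main obstacle is the survival estimate: one must resist the tempting but fatal worst-case deletion count and instead exploit that the edges removed each round are \emph{uniformly random} rather than adversarial, so that any single good set is destroyed with probability only $\approx\binom{k}{2}\binom{n-2}{k-2}/\binom{n}{k}\approx k^4/n^2$ per round while there are merely $m\approx n^2/k^6$ rounds; the only technical care needed is the telescoping that makes the per-round deletion events disjoint, so that the union bound is clean.
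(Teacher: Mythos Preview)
Your argument is correct for the claim as literally stated, but it follows a genuinely different route from the paper and in fact proves something weaker than what the paper's proof establishes. You bound, for each fixed good $k$-set $S$, the \emph{probability} (over the randomness in rounds $1,\ldots,i-1$) that $S$ survives in $\mC$, and summing gives a lower bound on the \emph{expected} number of surviving good sets, hence on the unconditional probability $\P(e\in\mathcal M)$. The paper instead fixes an arbitrary history $e_1,\ldots,e_{i-1}$ and proves a \emph{deterministic} lower bound on the number of surviving good sets: it counts the $\bigl((k-1)^2+1\bigr)$-subsets of $[n]$ that meet every $e_j$ in at most one vertex (at least half of them survive, by essentially the same deletion count that underlies \Cref{clm:1}), and observes that the Erd\H{o}s--Szekeres monotone $k$-subset inside any such surviving set is automatically still in $\mC$. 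This two-level detour through the larger sets is precisely what sidesteps the ``fatal worst-case deletion count'' you warn about, but it does so deterministically rather than in expectation. The payoff is that the paper obtains the bound \emph{conditionally on any history}, which is exactly what is used immediately afterwards to conclude $\P(\text{no round is consistent})\le\bigl(1-\tfrac{1}{2\binom{(k-1)^2+1}{k}}\bigr)^m$; your unconditional bound proves \Cref{clm:2} as written but does not by itself justify that product, so if you intend your argument to feed into the rest of the proof you would need to upgrade it.
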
	
        
        \begin{proof}
            Consider a bipartite graph $\mG$ with bipartition $\mS \sqcup \mT$, where $\mS = \b{[n]}{(k-1)^2+1}$ and $\mT = \b{[n]}{k}$. We connect $S \in \b{[n]}{(k-1)^2+1}$ with $T \in \b{[n]}{k}$ if and only if $T \subseteq S$. Thus, every vertex in $\mT$ has degree $\b{n-k}{(k-1)^2+1-k}$. Let $\mG'$ be the induced subgraph of $\mG$ on $\mS \sqcup \mT'$, where $\mT'$ consists of the $k$-subsets on which $\sigma$ and $\tau$ are monotonically consistent. By the Erd\H{o}s--Szekeres theorem, each $S \in \mS$ is adjacent to at least one vertex in $\mT'$. 

            Fix $i \in [m]$. Let $e_1, e_2, \cdots, e_{i-1}$ be the edges chosen in rounds $1,2,\cdots, i-1$. Define
            \begin{gather*}
				\mS_i \ce \Bigl\{ S \in \mS \cl |S \cap e_j| \le 1 \text{ for all } j \in [i-1] \Bigr\}, \\
				\mT_i \ce \Bigl\{ T \in \mT' \cl |T \cap e_j| \le 1 \text{ for all } j \in [i-1] \Bigr\}. 
            \end{gather*}
            
            Note that $(\sigma,\tau)$ are monotonically consistent on every $e \in \mT_i$, and $\mT_i \subseteq \mC$. Moreover, since the total number of choices for $e$ is at most $|\mC| \le \b{n}{k}$, the probability that $(\sigma, \tau)$ are monotonically consistent on $e$ is at least $|\mT_i|/\b{n}{k}$. Thus, we only need to show that $|\mT_i| \ge \b{n}{k} \big/ 2\b{(k-1)^2+1}{k}$. 

            Since each $e_j$ intersects with at most $\b{k}{2} \b{n-2}{(k-1)^2-1}$ of the $((k-1)^2+1)$-subsets of $[n]$ in at least $2$ elements, we have
				\[ \Bigl| \bigl\{ S \in \mS \cl |S \cap e_j| \ge 2 \text{ for some } j\in [i-1] \bigr\} \Bigr| \,<\, m \cdot \b{k}{2} \b{n-2}{(k-1)^2-1} \le \frac{1}{2} \cdot \b{n}{(k-1)^2+1}. \]
            
			Thus, 
				\[ \bigl| \mS_i \bigr| \,\ge\, \b{n}{(k-1)^2+1} - \frac{1}{2} \cdot \b{n}{(k-1)^2+1} \,\ge\, \frac{1}{2} \cdot \b{n}{(k-1)^2+1}. \]
        
            Therefore, by the definitions of $\mS_i$ and $\mT_i$, we have
				\[ \bigl| \mT_i \bigr| \,\ge\, \bigl| N_{\mG'}(\mS_i) \bigr| \,\ge\, \frac{|\mS_i|}{\max_{T \in \mT}\deg(T)} \,\ge\, \frac{|\mS_i|}{\b{n-k}{(k-1)^2+1-k}}. \]

            This gives
				\[ \bigl| \mT_i \bigr| \,\ge\, \frac{1}{2} \cdot \b{n}{(k-1)^2+1} \cdot \frac{1}{\b{n-k}{(k-1)^2+1-k}} \,=\, \frac{\b{n}{k}}{2 \b{(k-1)^2+1}{k}}. \]
                
            This completes the proof of the claim. 
		\end{proof}
			
		By \cref{clm:2}, for any pair $(\sigma, \tau)$, we have 
			\[ \P \bigg( (\sigma, \tau) \text{ are not monotonically consistent on every edge of } \mH \bigg) \,\le\, \left( 1 - \frac{1}{2\b{(k-1)^2+1}{k}}\right)^m. \]
			
		Thus, we only need $\left( 1 - \frac{1}{2\b{(k-1)^2+1}{k}}\right)^m < \frac{1}{(n!)^2}$ to apply the union bound, which is satisfied if we take 
			\[ n \ce (1+o_k(1)) \cdot 4k^7 \ln k \cdot \b{(k-1)^2+1}{k} \,=\, \frac{(1+o_k(1)) \cdot 2\sqrt{2} \cdot k^{k+6.5} \cdot e^k \ln k}{\sqrt{\pi} \cdot e^{2.5}}. \] 
        In this case, we obtain 
			\[ m \,=\, \frac{\b{n}{k}}{2k^2 \cdot \b{k}{2}\b{n-2}{k-2}} \,=\, (1+o_k(1)) \cdot 16k^8 \ln^2 k \cdot \b{(k-1)^2+1}{k}^2 \,=\, \frac{(1+o_k(1)) \cdot 8 k^{2k+7} \cdot e^{2k} \ln^2 k}{\pi e^{5}}. \qedhere \]
	\end{proof}

    \begin{remark}
        For the weak Erd\H{o}s--Szekeres property, the total number of events $(\sigma, \tau)$ reduces to $n!$. This yields an improved bound for $\nwes(k)$ by a factor of $2$ and for $\fwes(k)$ by a factor of $4$. Nevertheless, as mentioned earlier, improving constant factors---or even polynomial factors in $k$---might be achieved by adjusting the parameters in the above proof. Hence we do not attempt such optimizations here. 
    \end{remark}
		
\section{Proof of \texorpdfstring{\Cref{thm:bounds_for_fwes(k)}}{Theorem \ref{thm:bounds_for_fwes(k)}}}\label{sec:fwes}	
    Recall that our goal is to show
		\[ \frac{k!}{2}+1 \,\le\, \fwes(k) \,\le\, \b{(k-1)^2+1}{k} - \b{(k-1)^2 - 1}{k-1}, \]
    for all $k \ge 3$. 
        
	The lower bound follows directly from a simple counting argument: a random permutation $<_2$ is monotonically consistent with $<_1$ on any edge with probability $\frac{2}{k!}$. Moreover, since $<_1$ is consistent with itself on every edge, these events cannot be disjoint. 
	
	For the upper bound, set $n \ce (k-1)^2 + 1$, let $V \ce [n]$, and consider the natural order $1 <_1 2 <_1 \cdots <_1 n$ on $[n]$. Define 
		\[ \mE \ce \b{[n-1]}{k} \cup \biggl\{ T \cup \{n-1,n\} \cl T \in \b{[n-2]}{k-2} \biggr\}, \]
	the family of $k$-subsets that either avoid $n$ or contain $\{n-1, n\}$. Then we have 
		\[ \bigl| \mE \bigr| \,=\, \b{(k-1)^2}{k} + \b{(k-1)^2 - 1}{k-2} \,=\, \b{(k-1)^2+1}{k} - \b{(k-1)^2 - 1}{k-1}. \] 
    
	We now show that $(V, \mE)$ satisfies the weak Erd\H{o}s--Szekeres property. 
    
	Consider an arbitrary linear order $<_2$ on $V$. For each $i \in [n]$, define $a_i$ as the number of vertices less than or equal to $i$ under $<_2$. Equivalently, $a_i = j$ if and only if $i$ is the $j$-th smallest element under $<_2$. Thus $(a_1, \cdots, a_n)$ is a permutation of $[n]$. For each $i = 1,2, \cdots, n$, let $x_i$ and $y_i$ denote the lengths of the longest increasing and decreasing subsequences of $(a_1, \cdots, a_n)$ ending at $a_i$, respectively. 
    
    Following the classical proof of the Erd\H{o}s--Szekeres theorem, the pairs $(x_1, y_1), \cdots, (x_{n-1}, y_{n-1})$ are all distinct: for any $i < j$, at least one of the longest increasing or longest decreasing subsequences ending at $a_i$ can be extended by appending $a_j$. 
    
	If some pair $(x_i, y_i)$ has a coordinate at least $k$, then there exists a monotone subsequence of length $k$ within $(a_1, \cdots, a_{n-1})$. Since $\mE$ contains all $k$-subsets of $[n-1]$, this monotone subsequence forms a consistent edge, thus establishing the weak Erd\H{o}s--Szekeres property in this case. 
    
    On the other hand, if all pairs $(x_1, y_1), \cdots, (x_{n-1}, y_{n-1})$ have both coordinates strictly less than $k$, then every ordered pair in $[k-1] \times [k-1]$ appears exactly once among them (since $n = (k-1)^2+1$). In particular, there is some $(x_i, y_i) = (k-1, k-1)$. If $i \neq n-1$, then appending $a_{n-1}$ to either the longest increasing or decreasing subsequence ending at $a_i$ creates a monotone subsequence of length $k$, a contradiction. Thus, we must have $(x_{n-1}, y_{n-1}) = (k-1, k-1)$. In this case, appending $a_n$ to the longest monotone subsequence ending at $a_{n-1}$ produces a subsequence of length $k$. Since every $k$-subset containing both $\{n-1, n\}$ is in $\mE$, the weak Erd\H{o}s--Szekeres property also holds here. This completes the proof. 
	
\section{Concluding Remarks} \label{sec:concl_rmk}
	In this note, we studied Property~O and the Erd\H{o}s--Szekeres properties for linear $k$-graphs. For Property~O, although we did not consider arbitrary $k$-graphs, this remains an intriguing problem. We believe the following question has an affirmative answer, but at present we are unable to prove even the weaker statement that there exists an absolute constant $c > 1$ such that $f(k) > ck!$ for all sufficiently large $k$. 
    
	\begin{question}[Duffus--Kay--R\"{o}dl \cite{DKR18}] 
		Is it true that $\frac{f(k)}{k!} \to \infty$ as $k \to \infty$? 
	\end{question}
		
	Ignoring the distinction between consistency and monotonic consistency, the Erd\H{o}s--Szekeres property is strictly stronger than Property~O, as it requires that the local orderings on each edge extend to a global ordering. Moreover, the strong version imposes even stricter constraints than the weak one. Thus, any separation result between these three notions (either in the linear or general setting) would be of interest. 
    
	\begin{question}
        Can we establish any separation results between Property~O and the Erd\H{o}s--Szekeres properties, either in the linear or general setting? In particular, does 
            \[ \lim\limits_{k \to \infty} \frac{\fwes^\cdot(k)}{f^\cdot(k)} = \infty \quad \text{and} \quad \lim\limits_{k \to \infty} \frac{\fses^\cdot(k)}{\fwes^\cdot(k)} = \infty \,, \]
        where $f^\cdot$ denotes either $f$ or $f'$?
    \end{question}
			
	Furthermore, for the linear Erd\H{o}s--Szekeres properties, the gap between the best-known upper and lower bounds is $\exp(O(k))$, which is larger than the corresponding gap for Property~O and linear Property~O. This motivates the following problem: 
    
	\begin{question} \label{prob:es_poly_factor}
        Determine $\fwes'(k), \nwes'(k), \fses'(k)$ and $\nses'(k)$ up to a $\poly(k)$ multiplicative factor. 
	\end{question}
			
	Finally, regarding the Erd\H{o}s--Szekeres properties in general $k$-graphs, it is known that with $(k-1)^2+1$ vertices, all edges are required to guarantee the strong Erd\H{o}s--Szekeres property, since there exists a permutation of $\{1, \cdots, (k-1)^2+1\}$ containing only one monotone subsequence of length $k$. A natural question is whether one can achieve the strong Erd\H{o}s--Szekeres property using more vertices but fewer edges. 
			
	\begin{question} \label{prob:value_for_ses}
		Does $\fses(k) = \b{(k-1)^2+1}{k}$? 
	\end{question}
			
\section*{Acknowledgements}
    This work was initiated during the $2^{\text{nd}}$ ECOPRO Student Research Program at the Institute for Basic Science (IBS) in the summer of 2024. MO gratefully acknowledges Hong Liu for hosting his visit to IBS as a student researcher. 

\bibliographystyle{plain}
{\small \bibliography{reference}}

\begin{thebibliography}{10}

\bibitem{AS16}
N.~Alon and J.~H. Spencer.
\newblock {\em The probabilistic method}.
\newblock Wiley Series in Discrete Mathematics and Optimization. John Wiley \&
  Sons, Inc., Hoboken, NJ, fourth edition, 2016.

\bibitem{Bec78}
J.~Beck.
\newblock On {$3$}-chromatic hypergraphs.
\newblock {\em Discrete Math.}, 24(2):127--137, 1978.

\bibitem{Ber07}
F.~Bernstein.
\newblock Zur theorie der trigonometrischen reihe.
\newblock {\em Journal für die reine und angewandte Mathematik}, 132:270--278,
  1907.

\bibitem{CK15}
D.~D. Cherkashin and J.~Kozik.
\newblock A note on random greedy coloring of uniform hypergraphs.
\newblock {\em Random Structures Algorithms}, 47(3):407--413, 2015.

\bibitem{DKR18}
D.~Duffus, B.~Kay, and V.~R\"{o}dl.
\newblock The minimum number of edges in uniform hypergraphs with property {O}.
\newblock {\em Combin. Probab. Comput.}, 27(4):531--538, 2018.

\bibitem{Erd63}
P.~Erd\H{o}s.
\newblock On a combinatorial problem.
\newblock {\em Nordisk Mat. Tidskr.}, 11:5--10, 40, 1963.

\bibitem{Erd64}
P.~Erd\H{o}s.
\newblock On a combinatorial problem. {II}.
\newblock {\em Acta Math. Acad. Sci. Hungar.}, 15:445--447, 1964.

\bibitem{ES35}
P.~Erd\H{o}s and G.~Szekeres.
\newblock A combinatorial problem in geometry.
\newblock {\em Compositio Mathematica}, 2:463--470, 1935.

\bibitem{KKLMT19}
G.~Kronenberg, C.~Kusch, A.~Lamaison, P.~Micek, and T.~Tran.
\newblock A note on the minimum number of edges in hypergraphs with property
  {O}.
\newblock {\em European J. Combin.}, 81:172--177, 2019.

\bibitem{Lam24}
A.~Lamaison.
\newblock Palettes determine uniform {T}ur\'an density, 2024.
\newblock \arxiv{2408.09643}.

\bibitem{RS00}
J.~Radhakrishnan and A.~Srinivasan.
\newblock Improved bounds and algorithms for hypergraph {$2$}-coloring.
\newblock {\em Random Structures Algorithms}, 16(1):4--32, 2000.

\bibitem{She85}
J.~B. Shearer.
\newblock On a problem of {S}pencer.
\newblock {\em Combinatorica}, 5(3):241--245, 1985.

\bibitem{Spe77}
J.~H. Spencer.
\newblock Asymptotic lower bounds for {R}amsey functions.
\newblock {\em Discrete Math.}, 20(1):69--76, 1977.

\bibitem{Spe81}
J.~H. Spencer.
\newblock Coloring {$n$}-sets red and blue.
\newblock {\em J. Combin. Theory Ser. A}, 30(1):112--113, 1981.

\end{thebibliography}

\end{document}